\theoremstyle{plain}
\newtheorem{theorem}{Theorem}[section]
\newtheorem{lemma}[theorem]{Lemma}
\newtheorem{corollary}[theorem]{Corollary}
\theoremstyle{definition}
\newtheorem{definition}[theorem]{Definition}
\theoremstyle{remark}
\newtheorem*{remark*}{Remark}
\numberwithin{equation}{section}
\newcommand{\dosfilas}[2]{
  \ldelim[{2}{2mm}& #1 &\rdelim]{2}{2mm} \\
  & #2 & &  & &
}
\newcommand\D{{\mathcal D}}
\newcommand\F{{\mathcal F}}
\newcommand\G{{\mathcal G}}
\newcommand\RR{{\mathbb R}}
\newcommand\ZZ{{\mathbb Z}}
\newcommand\NN{{\mathbb N}}
\newcommand\Sh{\mbox{\Large $\mathfrak {s}$}}
   \title{Higher order recurrence relation for exceptional Charlier, Meixner, Hermite and Laguerre orthogonal polynomials
  \footnote{Partially supported by MTM2012-36732-C03-03 (Ministerio de Economía y Competitividad),
FQM-262, FQM-4643, FQM-7276 (Junta de Andalucía) and Feder Funds (European
Union).}}
   \author{Antonio J. Dur\'{a}n\\
     \footnotesize
        \  Departamento de An\'{a}lisis Matem\'{a}tico.
       Universidad de Sevilla \\
       \footnotesize Apdo (P. O. BOX) 1160. 41080 Sevilla. Spain.
   duran@us.es \\
          \ \ }
   \date{}
\begin{document}
   \maketitle

\bigskip

\begin{abstract}
In this paper we prove in a constructing way that exceptional Charlier, Meixner, Hermite and Laguerre polynomials satisfy  higher order recurrence relations. Our conjecture is that the recurrence relations provided in this paper have minimal order.
\end{abstract}

\section{Introduction}
Exceptional and exceptional discrete orthogonal polynomials $p_n$, $n\in X\varsubsetneq \NN$, are complete orthogonal polynomial systems with respect to a positive measure which in addition are eigenfunctions of a second order differential or difference operator, respectively. They extend the classical families of Hermite, Laguerre and Jacobi, and the classical discrete families of Charlier, Meixner, Krawtchouk and Hahn. The last few years have seen a great deal of activity in the area of exceptional and exceptional discrete orthogonal polynomials (see, for instance,
\cite{DEK,duch,dume,GUKM1}, \cite{GUKM2} (where the adjective \textrm{exceptional} for this topic was introduced), \cite{GUKM3,GUKM4,GUGM,G,GQ,MR,OS0,OS,OS3,Qu,STZ,Ta} and the references therein). One can also add to the list exceptional discrete polynomials on nonuniform lattices and exceptional $q$-orthogonal polynomials related to second order $q$-difference operators (\cite{OS,OS2,OS4,OS5,OS6}).

The most apparent difference between classical or classical discrete orthogonal polynomials and their exceptional counterparts
is that the exceptional families have gaps in their degrees, in the
sense that not all degrees are present in the sequence of polynomials (as it happens with the classical families) although they form a complete orthonormal set of the underlying $L^2$ space defined by the orthogonalizing positive measure. This
means in particular that they are not covered by the hypotheses of Bochner's and Lancaster's classification theorems (see \cite{B} or \cite{La}) for classical and classical discrete orthogonal polynomials, respectively.
Exceptional orthogonal polynomials have been applied to shape-invariant potentials \cite{Qu},
supersymmetric transformations \cite{GUKM3}, to discrete quantum mechanics \cite{OS}, mass-dependent potentials \cite{MR}, and to quasi-exact solvability \cite{Ta}.

Favard's Theorem establishes that a sequence of polynomials $(p_n)_{n\in \NN}$, $p_n$ of degree $n$, is orthogonal (with non null norm) with respect to a measure supported in the real line if and only if it satisfies a three term recurrence relation of the form ($p_{-1}=0$)
$$
xp_n(x)=a_np_{n+1}(x)+b_np_n(x)+c_np_{n-1}(x), \quad n\ge 0,
$$
where $(a_n)_{n\in \NN}$, $(b_n)_{n\in \NN}$ and $(c_n)_{n\in \NN}$ are sequences of real numbers with $a_{n-1}c_n\not =0$, $n\ge 1$. If, in addition, $a_{n-1}c_n>0$, $n\ge 1$,
then the polynomials $(p_n)_{n\in \NN}$ are orthogonal with respect to a positive measure with infinitely many points in its support, and conversely.

The gaps in their degrees imply that exceptional orthogonal polynomials do not satisfy three term recurrence relations as the usual orthogonal polynomials do. However, as we point out in \cite{duch}, these  families of exceptional polynomials satisfy higher order recurrence relations of the form
\begin{equation}\label{horr}
\lambda (x) p_n(x)=\sum_{j=-r}^ra_{n,j}p_{n+j},\quad n\ge n_0,
\end{equation}
where $\lambda$ is a polynomial of degree $r$, $(a_{n,j})_{n}$, $j=-r,\cdots ,r$, are sequences of numbers independent of $x$ (called recurrence coefficients), with $a_{n,r}\not =0$, for $n$ big enough and $n_0$ is certain nonnegative integer. We say that this high order recurrence relation has order $2r+1$.
Some examples of these higher order recurrence relations already appeared in \cite{STZ}. For other kind of higher order recurrence relations with recurrence coefficients depending on $x$ satisfied by exceptional polynomials see \cite{od} and \cite{GUGM}. We say that a recurrence relation $T$ of the form (\ref{horr}) is of minimal order if any other recurrence relation $\tilde T$ of the form (\ref{horr}) satisfied by the polynomials $(p_n)_n$ has order bigger that or equal to $T$.

The purpose of this paper is to prove that exceptional Charlier, Meixner, Hermite and Laguerre polynomials always satisfy higher order recurrence relations of the form (\ref{horr}). We also provide a method to explicitly find the recurrence coefficients. We conjecture that our method provides the minimal order recurrence relation. For example the recurrence relations considered in \cite{STZ} for some instances of exceptional Laguerre polynomials have order $4\ell+1$, where $\ell$ is certain nonnegative integer, while the ones we provide in this paper for the same exceptional polynomials have order $2\ell+3$.

In \cite{duch} and \cite{dume}, we have constructed exceptional Charlier and Meixner polynomials from Krall discrete polynomials by using the
concept of dual families of polynomials (see \cite{Leo}).

\begin{definition}\label{dfp}
Given two sets of nonnegative integers $U,V\subset \NN$, we say that the two sequences of polynomials
$(p_u)_{u\in U}$, $(q_v)_{v\in V}$ are dual if there exist a couple of sequences of numbers $(\xi_u)_{u\in U}, (\zeta_v)_{v\in V} $ such that
\begin{equation}\label{defdp}
\xi_up_u(v)=\zeta_vq_v(u), \quad u\in U, v\in V.
\end{equation}
\end{definition}

It turns out that duality interchanges exceptional discrete orthogonal polynomials with the so-called Krall discrete orthogonal polynomials. A Krall discrete orthogonal family is a sequence of polynomials $(q_n)_{n\in \NN}$, $q_n$ of degree $n$, orthogonal with respect to a positive measure which, in addition, are also eigenfunctions of a higher order difference operator. A huge amount of families of Krall discrete orthogonal polynomials have been recently introduced by the author by mean of certain Christoffel transform of the classical discrete measures of Charlier, Meixner, Krawtchouk and Hahn (see \cite{du0,du1,dudha,DdI}). A Christoffel transform is a transformation which consists in multiplying a measure $\mu$ by a polynomial $r$. It has a long tradition in the context of orthogonal polynomials: it goes back a century and a half ago when E.B. Christoffel (see \cite{Chr} and also \cite{Sz}) studied it for the particular case $r(x)=x$.

Our procedure to construct the higher order recurrence relations for the exceptional discrete polynomials consists in applying duality to the higher order difference operator with respect to which the associated Krall discrete polynomials are eigenfunctions. This will be done in Sections 2 and 4 for exceptional Charlier and Meixner polynomials, respectively.

One can  obtain  exceptional Hermite and Laguerre polynomials by taking limits in some of the parameters of the exceptional Charlier and Meixner polynomials, respectively. This can be done in the same way as one goes from Charlier and Meixner polynomials to Hermite and Laguerre polynomials, respectively, in the Askey tableau. By taking limit in the higher order recurrences relation for the exceptional Charlier and Meixner polynomials, one can also find higher order recurrence relations for exceptional Hermite and Laguerre polynomials. This will be done in Sections 3 and 5, respectively.

Recurrence relation for exceptional Hahn and Jacobi polynomials will be provided in the forthcoming \cite{duha}.

\section{Exceptional Charlier polynomials}
We start with some basic definitions and facts about Charlier  polynomials.

For $a\neq0$, we write $(c_n^a)_n$ for the sequence of Charlier polynomials (the next formulas can be found in \cite{Ch}, pp. 170-1; see also \cite{KLS}, pp., 247-9 or \cite{NSU}, ch. 2) defined by
\begin{equation}\label{Chpol}
    c_n^a(x)=\frac{1}{n!}\sum_{j=0}^n(-a)^{n-j}\binom{n}{j}\binom{x}{j}j!.
\end{equation}
For $n<0$, we write $c_n^a=0$.
The Charlier polynomials are orthogonal with respect to the measure
\begin{equation}\label{Chw}
    \rho_a=\sum_{x=0}^{\infty}\frac{a^x}{x!}\delta_x,\quad a\neq0,
\end{equation}
which is positive only when $a>0$.

They are eigenfunctions of the following second-order difference operator
\begin{equation}\label{Chdeq}
   D_a=-x\Sh_{-1}+(x+a)\Sh_0-a\Sh_1,\quad D_a(c_n^a)=nc_n^a,\quad n\geq0,
\end{equation}
where $\Sh_j(f)=f(x+j)$.

\bigskip
From now on, $F$ will denote a finite set of positive integers. We will write $F=\{ f_1,\cdots , f_k\}$, with $f_i<f_{i+1}$. Hence $k$ is the number of elements of $F$ and $f_k$ is the maximum element of $F$.

We associate to $F$ the nonnegative integers $u_F$ and $w_F$ and the infinite set of nonnegative integers $\sigma_F$ defined by
\begin{align}\label{defuf}
u_F&=\sum_{f\in F}f-\binom{k+1}{2},\\\label{defwf}
w_F&=\sum_{f\in F}f-\binom{k}{2}+1,\\\label{defsf}
\sigma _F&=\{u_F,u_F+1,u_F+2,\cdots \}\setminus \{u_F+f,f\in F\}.
\end{align}
The infinite set $\sigma_F$ will be the set of indices for the exceptional Charlier or Hermite polynomials associated to $F$.

Along this paper, we use the following notation: given a finite set of positive integers $F=\{f_1,\ldots , f_k\}$, the expression
\begin{equation}\label{defdosf}
  \begin{array}{@{}c@{}cccc@{}c@{}}
    \dosfilas{ z_{f,1} & z_{f,2} &\cdots  & z_{f,k} }{f\in F}
  \end{array}
\end{equation}
inside of a matrix or a determinant will mean the submatrix defined by
$$
\left(
\begin{array}{cccc}
z_{f_1,1} & z_{f_1,2} &\cdots  & z_{f_1,k}\\
\vdots &\vdots &\ddots &\vdots \\
z_{f_k,1} & z_{f_k,2} &\cdots  & z_{f_k,k}
\end{array}
\right) .
$$

We are now ready to introduce exceptional Charlier polynomials (see \cite{duch}).

\begin{definition}
For a given real number $a\not =0$ and a finite set $F$ of positive integers, we define the polynomials $c_n^{a;F}$, $n\ge 0$, as
\begin{equation}\label{defchex}
c_n^{a;F}(x)=\left|
  \begin{array}{@{}c@{}cccc@{}c@{}}
    & c_{n-u_F}^a(x)&c_{n-u_F}^a(x+1)&\cdots &c_{n-u_F}^a(x+k)& \\
    \dosfilas{c_{f}^a(x)  & c_{f}^a(x+1) &\cdots  & c_{f}^a(x+k)}{f\in F}
  \end{array}
  \right| ,
\end{equation}
where the number $u_F$ is defined by (\ref{defuf}) (the determinant (\ref{defchex}) should be understood as explained above: see (\ref{defdosf})).
\end{definition}
We have that for $n\in \sigma _F$ (see  (\ref{defsf})), $c_n^{a;F}$ is a polynomial of degree $n$.  But for $n\not \in \sigma_F$ the determinant (\ref{defchex}) vanishes and then $c_n^{a;F}=0$ (if $n<u_F$, the first row is zero, and for
$n\ge u_F$ and $n\not \in \sigma_F$, there are two equal rows).

In \cite{duch}, we proved that these polynomials are always eigenfunctions of a second order difference operator with rational coefficients. Under the assumption $a>0$ and the admissibility condition
\begin{equation}\label{admch}
\prod_{f\in F}(x-f)\ge 0,\quad x\in \NN,
\end{equation}
the polynomials $(c_n^{a;F})_{n\in \sigma_F}$ are orthogonal (and complete) with respect to a positive measure (see Theorems 4.4 and 4.5 in \cite{duch}). We call these polynomials exceptional Charlier polynomials. Since we want to work with orthogonal polynomials with respect to positive measures we will assume
that $a>0$ and that the admissibility condition (\ref{admch}) holds, although these assumptions are not needed for the implementation of our method to find higher order recurrence relations for the polynomials (\ref{defchex}).

Related to the exceptional Charlier polynomials is the
Casoratian determinant defined by
\begin{equation}\label{casc}
\Omega _F ^a (x)=\det (c_{f}^{a}(x-j+1))_{i,j=1}^k.
\end{equation}
$\Omega _F^a$ is a polynomial of degree $w_F-1$ (see (\ref{defwf}) for the definition of the number $w_F$), which enjoys the following nice symmetry
$$
\Omega _F ^a (x)=(-1)^{u_F+k}\Omega _{I(F)}^{-a}(-x),
$$
where $I$ is the involution defined in the set $\Upsilon$  formed by all finite sets of positive  integers by
\begin{align}\label{dinv}
I(F)=\{1,2,\cdots, f_k\}\setminus \{f_k-f,f\in F\},
\end{align}
(see \cite{duch}, (3.28)).

Up to an additive constant, we define the polynomial $\lambda _F^a$  of degree $w_F$ by solving the
first order difference equation
\begin{equation}\label{lch}
\lambda_F^a (x)-\lambda_F^a (x-1)=\Omega _F^a(x).
\end{equation}
As we will see below, the higher order recurrence relation for the exceptional Charlier polynomials is constructed from this polynomial $\lambda_F^a$.

Consider now the measure
\begin{equation}\label{mraf}
\rho _{a}^{F}=\sum _{x=u_F}^\infty \prod_{f\in F}(x-f-u_F)\frac{a^{x-u_F}}{(x-u_F)!}\delta _x.
\end{equation}
For $a>0$ and under the assumption (\ref{admch}), this measure is positive.
Notice that the measure $\rho_{a}^{F}$ is supported in the infinite set of nonnegative integers  $\sigma_F$ (\ref{defsf}).

The measure $\rho_a^\F$ has associated a sequence of orthogonal polynomials  $q_n^{a;F}$, $n\ge 0$, which can be constructed using the Christoffel-Szeg\"o determinantal formula (\cite{Sz}, Th. 2.5)
\begin{equation}\label{defqnch}
q_n^{a;F}(x)=\frac{\left|
  \begin{array}{@{}c@{}cccc@{}c@{}}
    & c_n^a(x-u_F)&c_{n+1}^a(x-u_F)&\cdots &c_{n+k}^a(x-u_F)& \\
    \dosfilas{c_n^a(f)&c_{n+1}^a(f)&\cdots &c_{n+k}^a(f)}{f\in F}
  \end{array}
  \right| }{\prod_{f\in F}(x-f-u_F)}.
\end{equation}
In \cite{DdI}, Th. 1.1, it is proved that the polynomials $q_n^{a;F}(x+u_F)$, $n\ge 0$, are eigenfunctions of a higher order difference operator. This operator can be explicitly constructed by means of the formula
\begin{equation}\label{expexp}
D_F=\lambda_F^a(D_a)+\sum_{g\in I(F)}M_g(D_a)\circ \nabla \circ c_g^{-a}(-D_a-1)
\end{equation}
where $I$ is the involution (\ref{dinv}), $\nabla$ is the first order operator $\nabla f(x)=f(x)-f(x-1)$, $D_a$ is the Charlier second order difference operator (\ref{Chdeq}), $\lambda _F^a$ is the polynomial (\ref{lch}) and $M_h$, $h=1,\cdots , m$, are certain polynomials which can be explicitly constructed (see \cite{DdI}, Section 5). The associated eigenvalues are given
by the polynomial $\lambda _F^a$, so that $D_F(q_n^{a;F})=\lambda _F^a(n)q_n^{a;F}$.

In \cite{DdI}, Theorem 1.1, it is also proved that $D_F$ is a difference operator of order $2w_F+1$, which can be written in terms of the shift operators $\Sh_j$ in the form
\begin{equation}\label{expexp2}
D_F=\sum_{j=-w_F}^{w_F} h_j(x)\Sh _j,
\end{equation}
where $h_j$, $j=-w_F,\cdots, w_F$, are certain polynomials.

It turns out that the exceptional Charlier polynomials $c_n^{a;F}$, $n\in \sigma_F$, are strongly related by duality with the polynomials $q_n^{a;F}$, $n\ge 0$.

\begin{lemma}[Lemma 3.2 of \cite{duch}]\label{lem3.2}
If $u$ is a nonnegative integer and $v\in \sigma_F$, then
\begin{equation}\label{duaqnrn}
q_u^{a;F}(v)=\xi_u\zeta_vc_v^{a;F}(u),
\end{equation}
where
\begin{equation}\label{zi}
\xi_u=\frac{(-a)^{(k+1)u}}{\prod_{i=0}^k(u+i)!},\quad \zeta_v=\frac{(-a)^{-v}(v-u_F)!\prod_{f\in F}f!}{\prod_{f\in F}(v-f-u_F)}.
\end{equation}
\end{lemma}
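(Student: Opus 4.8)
The plan is to deduce the stated duality from the elementary self-duality of the classical Charlier polynomials and then propagate that identity through the two determinantal formulas (\ref{defqnch}) and (\ref{defchex}). First I would record the elementary fact: starting from (\ref{Chpol}) and using $\binom{n}{j}\binom{m}{j}j!=\frac{n!}{(n-j)!}\cdot\frac{m!}{j!(m-j)!}$ for nonnegative integers $n,m$, one gets
\begin{equation*}
\frac{n!}{(-a)^n}\,c_n^a(m)=n!\,m!\sum_{j\ge0}\frac{(-a)^{-j}}{(n-j)!\,j!\,(m-j)!},
\end{equation*}
and the right-hand side is manifestly symmetric under $n\leftrightarrow m$. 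Hence $c_n^a(m)=\dfrac{(-a)^{n-m}\,m!}{n!}\,c_m^a(n)$ for all $n,m\in\NN$ (both sides being $0$ by convention when $m<0$).

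Next I would substitute this into the Christoffel--Szeg\"o determinant (\ref{defqnch}) for $q_u^{a;F}(v)$. Every entry of the numerator is of the form $c_{u+s}^a(\cdot)$ with $s=0,\dots,k$, the argument being $v-u_F$ in the top row and $f$ in the row indexed by $f\in F$. Applying the elementary duality to each entry, the column labelled by $s$ acquires the common factor $(-a)^{u+s}/(u+s)!$, the top row acquires the common factor $(-a)^{u_F-v}(v-u_F)!$, and the row labelled by $f\in F$ acquires the common factor $(-a)^{-f}f!$. Pulling all these scalars out of the determinant by multilinearity, and using that a determinant is invariant under transposition, what remains is exactly the determinant (\ref{defchex}) defining $c_v^{a;F}(u)$.

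It then remains to assemble the constants. Multiplying the extracted factors, dividing by $\prod_{f\in F}(v-f-u_F)$ as in (\ref{defqnch}), and using $\sum_{s=0}^k(u+s)=(k+1)u+\binom{k+1}{2}$, one arrives at
\begin{equation*}
q_u^{a;F}(v)=\frac{(-a)^{(k+1)u+\binom{k+1}{2}}}{\prod_{i=0}^k(u+i)!}\cdot(-a)^{u_F-v-\sum_{f\in F}f}\cdot\frac{(v-u_F)!\prod_{f\in F}f!}{\prod_{f\in F}(v-f-u_F)}\cdot c_v^{a;F}(u).
\end{equation*}
Since $u_F=\sum_{f\in F}f-\binom{k+1}{2}$ by (\ref{defuf}), the exponent of $-a$ in front collapses, $\binom{k+1}{2}+u_F-\sum_{f\in F}f=0$, and the prefactor becomes precisely $\xi_u\zeta_v$ with $\xi_u,\zeta_v$ as in (\ref{zi}), which is the claim.

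This argument is essentially a one-line duality identity followed by the multilinearity of determinants, so there is no serious obstacle; the only points requiring a word are that $v\in\sigma_F$ forces $v\ge u_F$ (so that $c_{v-u_F}^a$ is a genuine Charlier polynomial) and $\prod_{f\in F}(v-f-u_F)\neq0$ (so that the quotient in (\ref{defqnch}) is well defined), both immediate from the definition (\ref{defsf}) of $\sigma_F$; the rest is bookkeeping of powers of $-a$ and factorials.
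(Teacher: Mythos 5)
Your argument is correct and is essentially the proof of Lemma 3.2 in the cited reference \cite{duch}, which the present paper does not reproduce: one applies the self-duality $c_n^a(m)=\frac{(-a)^{n-m}\,m!}{n!}\,c_m^a(n)$ entrywise to the Christoffel--Szeg\"o determinant (\ref{defqnch}), pulls the row and column scalars out by multilinearity, and recognizes the remaining determinant as (\ref{defchex}) evaluated at $(n,x)=(v,u)$, with the powers of $-a$ collapsing via $u_F=\sum_{f\in F}f-\binom{k+1}{2}$. The only (immaterial) slip is the parenthetical claim that both sides of the self-duality vanish when $m<0$ --- the left-hand side $c_n^a(m)$ is a polynomial that need not vanish at negative integers --- but since $v\in\sigma_F$ forces $v-u_F\ge 0$ and every $f\in F$ is positive, only nonnegative arguments ever occur and this remark is never used.
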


We are now ready to establish the main result of this section.

\begin{corollary}\label{cor1} The exceptional Charlier polynomials defined by (\ref{defchex}) satisfy a $2w_F+1$ order recurrence relation of the form
\begin{equation}\label{horrchex}
\sum_{j=-w_F}^{w_F}A_j^{a;F}(n)c_{n+j}^{a;F}(x)=\lambda _F^{a} (x) c_{n}^{a;F}(x),\quad n\ge 0,
\end{equation}
where the number $w_F$ and the polynomial $\lambda_F^a$ are defined by (\ref{defwf}) and (\ref{lch}), respectively. For $j=-w_F,\cdots , w_F$, $A_j^{a;F}(n)$ is a rational function in $n$ which does not depend on $x$ (and whose denominator does not vanish for $n\in \NN$).
\end{corollary}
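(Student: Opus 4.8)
The plan is to derive the recurrence relation (\ref{horrchex}) for the exceptional Charlier polynomials by \emph{dualizing} the higher order difference equation satisfied by the Krall discrete polynomials $q_n^{a;F}$. First I would start from the eigenvalue equation $D_F(q_n^{a;F})=\lambda_F^a(n)q_n^{a;F}$, where $D_F$ is the difference operator written in shift form as in (\ref{expexp2}): $D_F=\sum_{j=-w_F}^{w_F}h_j(x)\Sh_j$. Evaluating this identity on the variable $x$ shifted appropriately and using that $q_n^{a;F}(x+u_F)$ is the genuine eigenfunction, one obtains, for every $v\in\sigma_F$,
\begin{equation}\label{planeq1}
\sum_{j=-w_F}^{w_F}h_j(v-u_F)\,q_n^{a;F}(v+j)=\lambda_F^a(n)\,q_n^{a;F}(v).
\end{equation}

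Next I would substitute the duality relation (\ref{duaqnrn}) from Lemma \ref{lem3.2}, namely $q_u^{a;F}(v)=\xi_u\zeta_v c_v^{a;F}(u)$, into (\ref{planeq1}), but with the roles of the two indices swapped: reading (\ref{planeq1}) with $n$ in the role of the argument and $v$ in the role of the degree-index, i.e. taking $u=n$ and letting $v$ range, and then using that $q_n^{a;F}(v+j)=\xi_n\zeta_{v+j}c_{v+j}^{a;F}(n)$ while $q_n^{a;F}(v)=\xi_n\zeta_v c_v^{a;F}(n)$. The common factor $\xi_n$ cancels, and after dividing by $\zeta_v$ one gets
\begin{equation}\label{planeq2}
\sum_{j=-w_F}^{w_F}h_j(v-u_F)\frac{\zeta_{v+j}}{\zeta_v}\,c_{v+j}^{a;F}(n)=\lambda_F^a(n)\,c_v^{a;F}(n).
\end{equation}
Now relabel: write $x$ for the fixed argument $n$ and write $n$ for the index $v$. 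Setting $A_j^{a;F}(n):=h_j(n-u_F)\zeta_{n+j}/\zeta_n$ turns (\ref{planeq2}) into exactly (\ref{horrchex}). One should check that the polynomial appearing on the right is $\lambda_F^a(x)$ and not $\lambda_F^a$ evaluated elsewhere — this is automatic since the eigenvalue in the $q$-equation is $\lambda_F^a$ of the degree, which becomes the argument after dualizing.

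It remains to verify the two structural assertions about the coefficients: that $A_j^{a;F}(n)$ is a rational function of $n$ independent of $x$, and that its denominator does not vanish on $\NN$. For the first, note $h_j$ is a polynomial, so $h_j(n-u_F)$ is polynomial in $n$; and from the explicit formula (\ref{zi}) for $\zeta_v$, the ratio $\zeta_{n+j}/\zeta_n$ equals $(-a)^{-j}$ times a ratio of factorials $(n+j-u_F)!/(n-u_F)!$ and a ratio $\prod_{f\in F}(n-f-u_F)/\prod_{f\in F}(n+j-f-u_F)$, which is manifestly a rational function of $n$. The denominator is $\prod_{f\in F}(n+j-f-u_F)$ (times possibly a factorial factor when $j<0$, namely $(n-u_F)(n-u_F-1)\cdots$); by (\ref{defsf}) the zeros of these products at $n+j=u_F+f$ or $n\le u_F$ are precisely the indices \emph{outside} $\sigma_F$, so for $n$ in the relevant range the denominator is nonzero — and if one wants the relation literally for all $n\ge 0$ one clears denominators, noting $c_m^{a;F}=0$ whenever $m\notin\sigma_F$ so the vanishing terms cause no harm. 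The main obstacle is purely bookkeeping: tracking the shift by $u_F$ between $q_n^{a;F}(x)$ and its eigenfunction form $q_n^{a;F}(x+u_F)$, and correctly matching which index plays the role of "degree" versus "argument" when applying duality, so that the arithmetic of the arguments $v-u_F$, $v+j$, $n+j-f-u_F$ lines up consistently; none of it is deep, but a sign or an off-by-$u_F$ error would corrupt the coefficients.
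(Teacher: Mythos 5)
Your overall strategy — dualizing the eigenvalue equation $D_F q_m^{a;F}=\lambda_F^a(m)q_m^{a;F}$ via Lemma \ref{lem3.2} and reading the resulting identity with the roles of degree and argument exchanged — is exactly the route the paper takes, and your index bookkeeping (the cancellation of $\xi$, the definition $A_j^{a;F}(n)=h_j(n)\zeta_{n+j}/\zeta_n$, the relabeling that turns the eigenvalue $\lambda_F^a(m)$ into the multiplier $\lambda_F^a(x)$) is correct. But there is a genuine gap at the one point the paper itself flags as ``the more difficult one'': the case $n\in\sigma_F$ with $n+j\notin\sigma_F$ for some $j$. There the duality relation simply does not apply to the term $q_m^{a;F}(n+j)$, because Lemma \ref{lem3.2} requires $v\in\sigma_F$; that term is a polynomial evaluated at an honest point and has no reason to vanish, whereas the corresponding term $A_j^{a;F}(n)c_{n+j}^{a;F}(x)$ on the exceptional side is forced to be zero since $c_{n+j}^{a;F}=0$. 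Your remark that ``the vanishing terms cause no harm'' gets the logic backwards: the vanishing of $c_{n+j}^{a;F}$ kills the term you \emph{want}, not the term you \emph{have}, so the dualized identity would be missing a possibly nonzero contribution $h_j(n)q_m^{a;F}(n+j)$. The same unproved fact is hiding in your claim about the denominator of $A_j^{a;F}$: the ratio $\zeta_{n+j}/\zeta_n$ genuinely has poles at the integers $n$ with $n+j=u_F+f$, $f\in F$ (and at $n\ge u_F$ with $n+j<u_F$), which are legitimate values of $n\in\NN$; ``clearing denominators'' does not make those poles go away.

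What closes both holes simultaneously, and what your proposal is missing, is the assertion that $h_j(n)=0$ whenever $n\in\sigma_F$ and $n+j\notin\sigma_F$. The paper obtains this from the fact that $D_F$, having the orthogonal polynomials of the positive measure $\rho_a^F$ as eigenfunctions, is symmetric with respect to $\rho_a^F$; symmetry of a difference operator with respect to a discrete weight is characterized by certain difference equations plus boundary conditions (Theorem 3.2 of \cite{du0}), and since $\rho_a^F$ is supported exactly on $\sigma_F$, those boundary conditions force $h_j(x-j)=0$ for $x\in(j+\sigma_F)\setminus\sigma_F$ and $h_{-j}(x)=0$ for $x\in\sigma_F\setminus(j+\sigma_F)$. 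This is what makes the offending terms in the eigenvalue equation vanish (so duality can still be applied formally) and what cancels the apparent poles of $\zeta_{n+j}/\zeta_n$, yielding the claim that the denominator of $A_j^{a;F}$ never vanishes on $\NN$. Without some argument of this kind your derivation establishes the recurrence only for those $n$ with $n+j\in\sigma_F$ for all $j$, and does not establish the stated regularity of the coefficients.
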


\begin{proof}
The starting point is the eigenvalue equation for the difference operator $D_F$ and the polynomials $(q_m^{a;F})_{m\in \NN}$
\begin{equation}\label{expexpt}
\sum_{j=-w_F}^{w_F} h_j(n)q_m^{a;F}(n+j)=\lambda_F^a(m)q_m^{a;F}(n).
\end{equation}
When $n+j\in \sigma_F$, $j=-w_F,\cdots , w_F$, using the duality (\ref{duaqnrn}) we get
\begin{equation}\label{expexps}
\sum_{j=-w_F}^{w_F} h_j(n)\xi_m\zeta_{n+j}c_{n+j}^{a;F}(m)=\lambda_F^a(m)\xi_m\zeta_{n}c_n^{a;F}(m).
\end{equation}
Since $\xi_m\not =0$ and $\zeta_n\not=0$, $n\in \sigma_F$, we deduce (\ref{horrchex}) where
\begin{equation}\label{defaj}
A_j^{a;F}(n)=\frac{h_j(n)\zeta_{n+j}}{\zeta_n}.
\end{equation}
When $n\not \in \sigma_F$, we have from the definition \ref{defchex} that $c_n^{a;F}=0$ and hence
the right hand side of (\ref{horrchex}) vanishes. We now see that the left hand side of (\ref{horrchex}) also vanishes. Indeed,
if $n+j\not \in \sigma_F$, again by the definition (\ref{defchex}) we have $c_{n+j}^{a;F}=0$. If $n+j\in \sigma_F$, then by definition of $\zeta_{n}$ (see (\ref{zi})) we have that $\zeta_{n+j}/\zeta_n=0$ and then $A^{a;F}_{n+j}=0$ (see (\ref{defaj})). In any case all the addends in the left hand side of (\ref{horrchex}) vanish.

The case when $n\in \sigma_F$ and $n+j\not\in \sigma_F$ for some $j$, $j=-w_F,\cdots , w_F$, is the more difficult one. The proof can be sketched as follows. Since the orthogonal polynomials with respect to the positive measure $\rho_a^F$ are eigenfunctions of the higher order difference operator $D_F$, we can conclude that $D_F$ is symmetric with respect to $\rho_a^F$ (i.e.,
for all polynomials $p$, $q$,
$$
\sum _{x=u_F}^\infty (D_Fp)(x)q(x)\rho_a^F(x)=\sum _{x=u_F}^\infty p(x)(D_Fq)(x)\rho_a^F(x)).
$$
It can be proved that the symmetry of a difference operator with respect to a discrete  weight is characterized by certain difference equations and a set of boundary conditions (see Theorem 3.2 of \cite{du0}). Since the support of $\rho _a^F$ is $\sigma_F$, for the difference operator $D_F$ with coefficients $h_j$, $j=-w_F,\cdots, w_F$, these boundary conditions are (see \cite{du0}, (3.2) and (3.4))
\begin{align*}
h_j(x-j)&=0\qquad \mbox{for $x\in (j+\sigma_F)\setminus \sigma_F$ and $j=1,\cdots, w_F$,}\\
h_{-j}(x)&=0\qquad \mbox{for $x\in \sigma_F\setminus (j+\sigma_F)$ and $j=1,\cdots, w_F$.}
\end{align*}
Taking into account the definition of $\sigma_F$, it is not difficult to see that this boundary conditions implies that for $n\in \sigma _F$ and $n+j\not \in \sigma_F$ then $h_j(n)=0$. Since $h_j(n)$ is a polynomial in $n$, we get from the definitions of $A_j^{a;F}$ (\ref{defaj}) and $\zeta _n$ (\ref{zi}) that $A_j^{a;F}$ is a rational function in $n$ whose denominator does not vanish for $n\in \NN$. Moreover, when $n\in \sigma _F$ and $n+j\not \in \sigma_F$ we deduce
$$
h_j(n)q_m^{a;F}(n+j)=\frac{h_j(n)\zeta_{n+j}c_{n+j}^{a;F}(m)}{\zeta_n}
$$
because $h_j(n)=0$ and $c_{n+j}^{a;F}=0$ as well. Hence, we can also apply duality in (\ref{expexpt}) to get (\ref{expexps}) and proceed as before.

\end{proof}

Our conjecture is that the higher order recurrence relation in Corollary \ref{cor1} has minimal order. In other words, the minimal order
for the recurrence relations of the form (\ref{horr}) satisfied by the exceptional Charlier polynomials is $2w_F+1$.

The expression (\ref{expexp}) for the higher order difference operator $D_F$ makes difficult to find explicitly the coefficients of its expansion (\ref{expexp2}) in terms of the shift operators $\Sh_n$, $n\in \ZZ$. These coefficients are needed to find explicit expressions for the recurrence coefficients $(A_j)_{j=-r}^r$ in (\ref{horrchex}). We have not been able to find explicit formulas for them in terms of arbitraries $a$ and $F$, but the expression (\ref{expexp}) allow as to find such explicit formulas for small values of $w_F$. Here it is an example.

Consider $F=\{1,2\}$. We have $w_F=3$ and then according to the Corollary \ref{cor1}, the exceptional Charlier polynomials $(c_n^{a;F})_n$ satisfy a seven order recurrence relation. Using (\ref{expexp}), we can compute explicitly the seven order difference operator with respect to which the polynomials $(q_n^{a;F})_n$ are eigenfunctions and the corresponding eigenvalues.
In doing that we get
$$
\sum_{j=-w_F}^{w_F}h_j(x)q_n^{a;F}(x+j)=\lambda _F^a(n)q_n^{a;F}(x),
$$
where
$$
h_j(x)=\begin{cases}
-x(x-4)(x-5)/6, &\mbox{if $j=-3$},\\
x(x-3)(x-4)/2, &\mbox{if $j=-2$},\\
-x(x-3)(x+a-2)/2, &\mbox{if $j=-1$},\\
(1/3-2a)x+(a-1/2)x^2+x^3/6, &\mbox{if $j=0$},\\
-ax(x-1+a)/2, &\mbox{if $j=1$},\\
a^2x/2, &\mbox{if $j=2$},\\
-a^3/6, &\mbox{if $j=3$,}
\end{cases}
$$
and $\displaystyle \lambda _F^a(n)=\frac{n^3}{6}+\frac{(1-a)n^2}{2}
+\frac{(2-3a+3a^2)n}{6}-\frac{a^3}{6}$.

Using (\ref{zi}) and (\ref{defaj}), we can explicitly find the coefficients $A_j^{a;F}$, $j=-3,\cdots , 3$, in Corollary \ref{cor1}:
\begin{equation}\label{coef1}
A_j^{a;F}(n)=\begin{cases}
a^3/6, &\mbox{if $j=-3$},\\
a^2(n/2-1), &\mbox{if $j=-2$},\\
(n-1)(a^2+(n-2)a)/2, &\mbox{if $j=-1$},\\
n^3/6+n^2(a-1/2)+n(1/3-2a), &\mbox{if $j=0$},\\
(n+1)(n-2)(a+n-1)/2, &\mbox{if $j=1$},\\
(n-1)(n^2-4)/2, &\mbox{if $j=2$},\\
(n+3)(n-1)(n-2)/6, &\mbox{if $j=3$,}
\end{cases}
\end{equation}
and, again, $\displaystyle \lambda_F^{a}(x)=\frac{x^3}{6}+\frac{(1-a)x^2}{2}
+\frac{(2-3a+3a^2)x}{6}-\frac{a^3}{6}$.

In this case, we have checked by using Maple that seven is the minimal order for a higher order recurrence relation for this family of exceptional Charlier polynomials (only linear equations are needed).

\bigskip

For every polynomial $\lambda$ such that $\lambda(x)-\lambda(x-1)$ is divisible by $\Omega _F^a$,
Theorem 3.2 in \cite{DdI} provides a higher order difference operator with respect to which the polynomials $q_n^{a;F}$, $n\in \NN$, are eigenfunctions with eigenvalues given by $\lambda(n)$. Again this difference operator is explicitly constructed.
For the example we are considering ($F=\{1,2\}$), we can then find other higher order recurrence relations for the exceptional Charlier polynomials which are not produced by iterating  that with coefficients (\ref{coef1}).
For instance, consider the polynomial
\begin{align*}
\tilde \lambda _F^a(x)=\frac{x^4}{8}+&\left(\frac{5}{12}-\frac{a}{2}\right)x^3
+\left(\frac{3a^2}{4}-a+\frac{3}{8}\right)x^2\\&+
\left(-\frac{a^3}{2}+\frac{3a^2}{4}-\frac{a}{2}+\frac{1}{12}\right)x+\frac{a^4}{8}-\frac{a^3}{6},
\end{align*}
which satisfies that
$$
\tilde \lambda _F^a(x)-\tilde \lambda _F^a(x-1)=c_1^a(x)\Omega _F^a(x),
$$
where $c_1^a(x)=x-a$ is the  Charlier polynomial of degree 1.

Proceeding as before, we find a nine order recurrence relation for this family of exceptional Charlier polynomials
of the form (\ref{horr}) where
\begin{equation}\label{coef2}
a_j(n)=\begin{cases}
a^4/8, &\mbox{if $j=-4$},\\
a^3(3n-8)/6, &\mbox{if $j=-3$},\\
a^2(n-2)(3n+2a-7))/4, &\mbox{if $j=-2$},\\
a(n-1)(n^2+3an-4n-7a+4)/2, &\mbox{if $j=-1$},\\
n^4/8+(3a/2-7/12)n^3+(3a^2/4-11a/2+7/8)n^2&\\\qquad+(-7a^2/4+5a-5/12)n, &\mbox{if $j=0$},\\
(3an-4a+n^2-2n+1)(n+1)(n-2)/2, &\mbox{if $j=1$},\\
(n-1)(n^2-4)(2a+3n-1)/4, &\mbox{if $j=2$},\\
(n+3)(n-1)(n-2)(1+3n)/6, &\mbox{if $j=3$},\\
(n+4)(n^2-1)(n-2)/8, &\mbox{if $j=4$},
\end{cases}
\end{equation}
and $\lambda(x)=\tilde \lambda _F^a(x)$.

\section{Exceptional Hermite polynomials}
We write $(H_n)_n$ for the sequence of Hermite polynomials (the next formulas can be found in \cite{Ch}, Ch. V; see also \cite{KLS}, pp, 250-3) defined by
\begin{equation}\label{Hpol}
 H_n(x)=n!\sum_{j=0}^{[n/2]}\frac{(-1)^j(2x)^{n-2j}}{j!(n-2j)!}.
\end{equation}
The Hermite polynomials are orthogonal with respect to the weight function $e^{-x^2}$, $x\in \RR$.

One can obtain Hermite polynomials from Charlier polynomials using the limit
\begin{equation}\label{blchh}
\lim_{a\to \infty}\left(\frac{2}{a}\right)^{n/2}c_n^a(\sqrt {2a}x+a)=\frac{1}{n!}H_n(x),
\end{equation}
see \cite{KLS}, p. 249.

Exceptional Hermite polynomials can be defined by means of the Wronskian
\begin{equation}\label{defhex}
H_n^F(x)=\left|
  \begin{array}{@{}c@{}cccc@{}c@{}}
    & H_{n-u_F}(x)&H_{n-u_F}'(x)&\cdots &H_{n-u_F}^{(k)}(x)& \\
    \dosfilas{H_{f}(x)&H_{f}'(x)&\cdots &H_{f}^{(k)}(x)}{f\in F}
  \end{array}
  \right|.
\end{equation}
As before, we have that for $n\in \sigma _F$ (see  (\ref{defsf})), $H_n^F$ is a polynomial of degree $n$.  But for $n\not \in \sigma_F$ the determinant (\ref{defhex}) vanishes and then $H_n^{F}=0$.

If we write
\begin{equation}\label{omh}
\Omega_F(x)=\det (H_{f_i}^{(j-1)}(x))_{i,j=1}^k,
\end{equation}
the admissibility condition (\ref{admch}) is equivalent
to the fact that the polynomial $\Omega_F(x)$ does not vanish in the real line (see \cite{Ad}, \cite{Kr} or \cite{duch}). Then, the polynomials $H_n^F$ are orthogonal with respect to the
positive weight
$$
\omega_{F}(x) =\frac{e^{-x^2}}{\Omega ^2_F(x)},\quad x\in \RR ,
$$
and they are called exceptional Hermite polynomials (see \cite{duch} and \cite{GUGM}).

Exceptional Hermite polynomials can be obtained from exceptional Charlier polynomials by using the basic limit (\ref{blchh}) (see \cite{duch}).
More precisely
\begin{equation}\label{lim1}
\lim_{a\to +\infty}\left(\frac{2}{a}\right)^{n/2}c_n^{a;F}(\sqrt{2a}x+a)=\frac{1}{(n-u_F)!\nu_F}H_n^F(x)
\end{equation}
uniformly in compact sets, where $\nu_F=2^{\binom{k+1}{2}}\prod_{f\in F}f!$

Up to an additive constant, we define the polynomial $\lambda_F$ of degree $w_F$ as the solution of the first order differential equation
\begin{equation}\label{mm1}
\lambda _F '=\frac{2^{k+1}}{\nu_F}\Omega _F,
\end{equation}
where $\Omega_F$ is the Wronskian (\ref{omh}).
We are now ready to establish the main result of this section.

\begin{corollary}\label{cor2} The exceptional Hermite polynomials defined by (\ref{defhex}) satisfy a $2w_F+1$ order recurrence relation of the form
\begin{equation}\label{horrhex}
\sum_{j=-w_F}^{w_F}A_j^{F}(n)H_{n+j}^{F}(x)=\lambda_F (x) H_{n}^{F}(x),\qquad n\ge 0,
\end{equation}
where the number $w_F$ and the polynomial $\lambda_F$ are defined by (\ref{defwf}) and (\ref{mm1}), respectively. For $j=-w_F,\cdots , w_F$, $A_j^{F}(n)$ is a rational function in $n$ which does not depend on $x$ and whose denominator does not vanish for $n\in \NN$.
\end{corollary}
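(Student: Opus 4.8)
The plan is to obtain the recurrence relation for the exceptional Hermite polynomials by passing to the limit $a\to+\infty$ in the recurrence relation (\ref{horrchex}) of Corollary~\ref{cor1}, exactly in the way exceptional Hermite polynomials arise from exceptional Charlier polynomials via the basic limit (\ref{blchh}). Concretely, I would replace $x$ by $\sqrt{2a}\,x+a$ in (\ref{horrchex}) and multiply both sides by the normalizing factor $(2/a)^{(n+w_F)/2}(n-u_F)!\,\nu_F$. Writing the right hand side as the product of $(2/a)^{w_F/2}\lambda_F^a(\sqrt{2a}\,x+a)$, of $(2/a)^{n/2}c_n^{a;F}(\sqrt{2a}\,x+a)$, and of $(n-u_F)!\,\nu_F$, and applying (\ref{lim1}) to the middle factor, the right hand side converges, uniformly on compact sets, to $[\lim_{a\to+\infty}(2/a)^{w_F/2}\lambda_F^a(\sqrt{2a}\,x+a)]\,H_n^F(x)$.

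The first auxiliary step is to identify that limit of $\lambda_F^a$: I would show that $(2/a)^{w_F/2}\lambda_F^a(\sqrt{2a}\,x+a)$ converges, uniformly on compact sets, to $\lambda_F(x)$ (up to an additive constant, which is immaterial since $\lambda_F$ is only defined up to a constant by (\ref{mm1})). This follows by applying (\ref{blchh}) to the Casoratian (\ref{casc}) --- the finite differences of the $c_f^a$ becoming, in the limit, derivatives of the $H_f$ --- which shows that $(2/a)^{(w_F-1)/2}\Omega_F^a(\sqrt{2a}\,x+a)$ converges to a constant multiple of the Wronskian $\Omega_F(x)$; the difference equation (\ref{lch}) degenerates, after the change of variable, into the differential equation (\ref{mm1}), and a bookkeeping of powers of $a$ and $2$ confirms that the constant $2^{k+1}/\nu_F$ chosen in (\ref{mm1}) is precisely the one making the two limits agree. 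This is routine but somewhat tedious.

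On the left hand side I would write the $j$-th term as $B_j(a)\,[\,(2/a)^{(n+j)/2}c_{n+j}^{a;F}(\sqrt{2a}\,x+a)\,]$, with $B_j(a)=(2/a)^{(w_F-j)/2}(n-u_F)!\,\nu_F\,A_j^{a;F}(n)$; by (\ref{lim1}) the bracket tends to $H_{n+j}^F(x)/((n+j-u_F)!\,\nu_F)$, while the terms with $n+j\notin\sigma_F$ drop out (both $c_{n+j}^{a;F}$ and $H_{n+j}^F$ vanish), exactly as in Corollary~\ref{cor1}. Since the limiting polynomials $H_{n+j}^F$, $n+j\in\sigma_F$, have pairwise distinct degrees $n+j$ with leading coefficients that stay bounded away from $0$, equating the coefficients of $x^{n+w_F},x^{n+w_F-1},\dots,x^{n-w_F}$ in the (already convergent) polynomial identity forces every $B_j(a)$ to converge. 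In the limit this yields (\ref{horrhex}) with $A_j^F(n)=\frac{(n-u_F)!}{(n+j-u_F)!}\lim_{a\to+\infty}(2/a)^{(w_F-j)/2}A_j^{a;F}(n)$; inserting $A_j^{a;F}(n)=h_j(n)\zeta_{n+j}/\zeta_n$ and (\ref{zi}), the factorials cancel and $A_j^F(n)=(-1)^j2^{(w_F-j)/2}\frac{\prod_{f\in F}(n-f-u_F)}{\prod_{f\in F}(n+j-f-u_F)}\lim_{a}a^{-(w_F+j)/2}h_j(n)$. Because $h_j$ is a polynomial whose coefficients are polynomials in $a$, this last limit is a polynomial in $n$ when $j\equiv w_F\pmod 2$ and is $0$ otherwise (the latter forced by the mere existence of the limit, and reflecting the parity $H_n^F(-x)=(-1)^nH_n^F(x)$); hence each $A_j^F(n)$ is a rational function of $n$, plainly independent of $x$. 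That its denominator does not vanish on $\NN$ is checked exactly as in Corollary~\ref{cor1}, the boundary conditions on the $h_j$ cancelling the would-be poles at the excluded indices $u_F+f$. Finally, for $n\in\sigma_F$ large enough one has $n+w_F\in\sigma_F$, and since $\lambda_F$ has degree $w_F$ the right hand side of (\ref{horrhex}) has degree $n+w_F$, which forces $A_{w_F}^F(n)\neq0$; so the recurrence has order $2w_F+1$.

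The hard part will be the convergence of the recurrence coefficients --- that is, pinning down the right normalization $(2/a)^{(w_F-j)/2}$ and, hand in hand with it, verifying that the normalization of $\lambda_F$ fixed by (\ref{mm1}) is the one compatible with the Charlier-to-Hermite limit, so that $(2/a)^{w_F/2}\lambda_F^a(\sqrt{2a}\,x+a)\to\lambda_F(x)$. Everything else is a faithful transcription, through the limit, of the argument already used to prove Corollary~\ref{cor1}.
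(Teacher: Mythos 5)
Your proposal is correct and follows essentially the same route as the paper: the paper's own proof is exactly "take the limit $a\to+\infty$ in Corollary \ref{cor1}" via (\ref{lim1}), together with the limit $(2/a)^{(u_F+k+1)/2}\lambda_F^a(\sqrt{2a}\,x+a)\to\lambda_F(x)$, and since $u_F+k+1=w_F$ your normalization $(2/a)^{w_F/2}$ agrees with the paper's. Your write-up merely supplies details (convergence of the coefficients $B_j(a)$, parity vanishing, nonvanishing of denominators) that the paper delegates to Sections 5 and 6 of \cite{duch}.
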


\begin{proof}
The proof follows just by taking limit in Corollary \ref{cor1} as in \cite{duch}, Sections 5 and 6. In particular, one gets
$$
\lim_{a\to +\infty}\left(\frac{2}{a}\right)^{(u_F+k+1)/2}\lambda _F^a (\sqrt{2a}x+a)=\lambda _F (x)
$$
where $\lambda_F$ is defined by (\ref{mm1}).

\end{proof}

As for exceptional Charlier polynomials, our conjecture is that the minimal order for the recurrence relations
of the form (\ref{horr}) satisfied by the exceptional Hermite  polynomials is
$2w_F+1$ and it corresponds with the recurrence relation given in the previous corollary.

\bigskip
For $F=\{ 1,2\}$, taking limit in (\ref{coef1}) and (\ref{coef2}), we get explicit expressions for the recurrence coefficients of
a seven and a nine term recurrence relation for the exceptional Hermite polynomials $(H_n^F)_n$. More precisely, the exceptional Hermite polynomials $(H_n^F)_n$
satisfy the seven order recurrence relation (\ref{horrhex}) where $w_F=3$,
$$
A_j^F(n)=\begin{cases}
4n(n-1)(n-2)/3, &\mbox{if $j=-3$},\\
2n(n-1), &\mbox{if $j=-1$},\\
n-2, &\mbox{if $j=1$},\\
\frac{(n-1)(n-2)}{6(n+1)(n+2)}, &\mbox{if $j=3$},\\
0, &\mbox{if $j=-2,0,2$,}
\end{cases}
$$
and $\lambda_F(x)=4x^3/3+2x$.

They also satisfy the nine order recurrence relation (\ref{horr}) where $r=4$,
$$
a_j(n)=\begin{cases}
2n(n-1)(n-2)(n-3), &\mbox{if $j=-4$},\\
4n(n-1)(n-2), &\mbox{if $j=-2$},\\
n(3n-7), &\mbox{if $j=0$},\\
\frac{(n-1)(n-2)}{n+1}, &\mbox{if $j=2$},\\
\frac{(n-1)(n-2)}{8(n+2)(n+3)}, &\mbox{if $j=4$},\\
0, &\mbox{if $j=-3,-1,1,3$},
\end{cases}
$$
and $\lambda(x)=2x^4+2x^2-1/2$.

\section{Exceptional Meixner polynomials}
We start with some basic definitions and facts about Meixner  polynomials.

For $a\not =0, 1$ we write $(m_{n}^{a,c})_n$ for the sequence of Meixner polynomials defined by
\begin{equation}\label{Mxpol}
m_{n}^{a,c}(x)=\frac{a^n}{(1-a)^n}\sum _{j=0}^n a^{-j}\binom{x}{j}\binom{-x-c}{n-j}
\end{equation}
(we have taken a slightly different normalization from the one used in \cite{Ch}, pp. 175-7, from where
the next formulas can be easily derived; see also \cite{KLS}, pp, 234-7 or \cite{NSU}, ch. 2).
Meixner polynomials are eigenfunctions of the following second order difference operator
\begin{equation}\label{Mxdeq}
D_{a,c} =\frac{x\Sh_{-1}-[(1+a)x+ac]\Sh_0+a(x+c)\Sh_1}{a-1},\qquad D_{a,c} (m_{n}^{a,c})=nm_{n}^{a,c},\quad n\ge 0,
\end{equation}
where $\Sh_l$ denotes the shift operator $\Sh_l(f)=f(x+l)$.
For $a\not =0,1$ and $c\not =0,-1,-2,\ldots $, they are always orthogonal with respect to a moment functional $\rho_{a,c}$. For $0<\vert a\vert<1$ and $c\not =0,-1,-2,\ldots $, we have
\begin{equation*}\label{MXw}
\rho_{a,c}=\sum _{x=0}^\infty \frac{a^{x}\Gamma(x+c)}{x!}\delta _x.
\end{equation*}
The moment functional $\rho_{a,c}$ can be represented by a positive measure only when $0<a<1$ and $c>0$.

From now on, $\F=(F_1,F_2)$ will denote a pair of finite sets of
positive integers. We denote by $k_j$ the number of elements of $F_j$,
$j=1,2$, and $k=k_1+k_2$ is the number of elements of $\F$. One of
the components of $\F$, but not both, can be the empty set.

We associate to $\F$ the nonnegative integers $u_\F$ and $w_\F$ and the infinite set of nonnegative integers $\sigma_\F$ defined by
\begin{align}\label{defuf2}
u_\F&=\sum_{f\in F_1}f+\sum_{f\in
F_2}f-\binom{k_1+1}{2}-\binom{k_2}{2},\\\label{defwf2}
w_\F&=\sum_{f\in F_1}f+\sum_{f\in
F_2}f-\binom{k_1}{2}-\binom{k_2}{2}+1,\\\label{defsf2}
\sigma _\F&=\{u_\F,u_\F+1,u_\F+2,\cdots \}\setminus \{u_\F+f,f\in
F_1\}.
\end{align}
The infinite set $\sigma_\F$ will be the set of indices for the exceptional Meixner or Laguerre polynomials associated to $\F$.

We are now ready to introduce exceptional Meixner  polynomials (see \cite{dume}).

\begin{definition}
Let $\F =(F_1,F_2)$ be a pair of finite sets of positive integers. For real numbers $a,c$, with $a\not = 0,1$ and $c\not =0,-1,-2,\ldots$, we define the polynomials $m_n^{a,c;\F}$, $n\ge 0$, as
\begin{equation}\label{defmex}
m_n^{a,c;\F}(x)=  \left|
  \begin{array}{@{}c@{}cccc@{}c@{}}
    & m_{n-u_\F}^{a,c}(x)&m_{n-u_\F}^{a,c}(x+1)&\cdots &m_{n-u_\F}^{a,c}(x+k) & \\
    \dosfilas{ m_{f}^{a,c}(x) & m_{f}^{a,c}(x+1) &\cdots  & m_{f}^{a,c}(x+k) }{f\in F_1} \\
    \dosfilas{ m_{f}^{1/a,c}(x) & m_{f}^{1/a,c}(x+1)/a & \cdots & m_{f}^{1/a,c}(x+k)/a^k }{f\in F_2}
  \end{array}
  \right|
\end{equation}
where the number $u_\F$ is defined by (\ref{defuf2}).
\end{definition}
The determinant (\ref{defmex}) should be understood as explained in   (\ref{defdosf}).

As before, we have that for $n\in \sigma _\F$ (see  (\ref{defsf2})), $m_n^{a,c;\F}$ is a polynomial of degree $n$.  But for $n\not \in \sigma_\F$ the determinant (\ref{defmex}) vanishes and then $m_n^{a,c;\F}=0$.

In \cite{dume}, we proved that these polynomials are always eigenfunctions of a second order difference operator with rational coefficients. Under the assumption $0<a<1$, $c\not =0,-1,-2,\cdots $ and the admissibility condition
\begin{equation}\label{defadmi}
\frac{\prod_{f\in F_1}(x-f)\prod_{f\in F_2}(x+c+f)}{(x+c)_{\hat c}}\ge 0,\quad x\ge 0,
\end{equation}
where $\hat c=\max \{-[c],0\}$ and $[c]$ denotes the value of the floor function at $c$ (i.e. $[c]=\max\{s\in \ZZ: s\le c\}$),
the polynomials $(m_n^{a,c;\F})_{n\in\sigma_\F}$ are orthogonal (and complete ) with respect to a positive measure (see Theorems 4.3 and 4.4 in \cite{dume}). We call these polynomials exceptional Meixner polynomials. Since we want to work with orthogonal polynomials with respect to positive measure we will assume that $0<a<1$, $c\not =0,-1,-2,\cdots $ and that the admissibility condition (\ref{defadmi}) holds, although these assumptions are not need for the implementation of our method to find higher order recurrence relations for the polynomials (\ref{defmex}).

Related to the exceptional Meixner polynomials is the Casoratian type determinant defined by
\begin{equation}\label{defom}
\Omega _\F^{a,c}(x)=  \left|
  \begin{array}{@{}c@{}cccc@{}c@{}}
    \dosfilas{ m_{f}^{a,c}(x) & m_{f}^{a,c}(x+1) &\cdots  & m_{f}^{a,c}(x+k-1) }{f\in F_1} \\
    \dosfilas{ m_{f}^{1/a,c}(x) & m_{f}^{1/a,c}(x+1)/a & \cdots & m_{f}^{1/a,c}(x+k-1)/a^{k-1} }{f\in F_2}
  \end{array}
  \right|.
\end{equation}
$\Omega _\F^{a,c}$ is a polynomial of degree $w_\F-1$ (see (\ref{defwf2}) for the definition of $w_\F$). In \cite{dume}, it has been conjectured that this determinant enjoys the symmetry
\begin{equation}\label{iza}
\Omega_\F^{a,c}(x)=(-1)^{u_\F+k_1}\frac{u_a(\F)}{u_a(\G)}\Omega_\G^{a,-c-M_{F_1}-M_{F_2}}
(-x),
\end{equation}
where $u_a(\F)=a^{\binom{k_2}{2}-k_2(k-1)}(1-a)^{k_1k_2}$.

Up to an additive constant, we define the polynomial $\lambda_\F^{a,c}$ of degree $w_\F$ as the solution of the first order difference equation
\begin{equation}\label{lme}
\lambda_\F^{a,c}(x)-\lambda_\F^{a,c}(x-1)=\Omega_\G^{a,-c-\max F_1-\max F_2}(-x),
\end{equation}
where $\G=(I(F1),I(F_2))$, and $I$ is the involution defined by (\ref{dinv}).
As we will see below, the higher order recurrence relation for the exceptional Meixner polynomials is constructed from this polynomial $\lambda_\F^{a,c}$.

Consider now the measure
\begin{equation}\label{mrafm}
\rho _{a,c}^{\F}=\sum _{x=u_\F}^\infty \prod_{f\in F_1}(x-f-u_\F)\prod_{f\in F_2}(x+c+f-u_\F)\frac{a^{x-u_\F}\Gamma(x+c-u_\F)}{(x-u_\F)!}\delta _x.
\end{equation}
For $0<a<1$ and $c\not =0,-1,-2,\cdots$ and under the assumption (\ref{defadmi}), this measure is positive. Notice that this measure is supported in the infinite set of nonnegative integers $\sigma_\F$ (\ref{defsf2}).

The measure $\rho _{a,c}^{\F}$ has associated a sequence of orthogonal polynomials $q_n^{a,c;\F}$, $n\ge 0$, which can be constructed using
the Christoffel-Szeg\"o determinantal formula
\begin{equation}\label{defqnme}
q_n^{a,c;\F}(x)=\frac{\left|
  \begin{array}{@{}c@{}cccc@{}c@{}}
    & m_n^{a,c}(x-u_\F)&m_{n+1}^{a,c}(x-u_\F)&\cdots &m_{n+k}^{a,c}(x-u_\F) & \\
    \dosfilas{ m_{n}^{a,c}(f) & m_{n+1}^{a,c}(f) &\cdots  & m_{n+k}^{a,c}(f) }{f\in F_1} \\
    \dosfilas{m_{n}^{1/a,c}(f) & -m_{n+1}^{1/a,c}(f) & \cdots & (-1)^{k}m_{n+k}^{1/a,c}(f)}{f\in F_2}
  \end{array}
  \right|}{(-1)^{nk_2}\prod_{f\in F_1}(x-f-u_\F)\prod_{f\in F_2}(x+c+f-u_\F)}.
\end{equation}
In \cite{DdI}, Theorem 6.2, it is proved that the polynomials $q_n^{a,c;\F}(x+u_\F)$, $n\ge 0$, are eigenfunctions of a higher order difference operator. This operator can be explicitly constructed by means of the formula
\begin{align}\label{expexp3}
D_\F=\lambda_F^{a,c}(D_{a,c})&+\sum_{g\in I(F_1)}\frac{M_g^1(D_{a,c})}{1-a}\circ \nabla \circ m_g^{a,-c-\max I(F_1)-\max I(F_2)}(-D_{a,c}-1)\\\nonumber
&+\sum_{g\in I(F_2)}\frac{a M_g^2(D_{a,c})}{1-a}\circ \Delta \circ m_g^{1/a,-c-\max I(F_1)-\max I(F_2)}(-D_{a,c}-1)
\end{align}
where $D_{a,c}$ is the Meixner second order difference operator (\ref{Mxdeq}), $\lambda _\F^{a,c}$ is the polynomial (\ref{lme}) and $M_{g}^i$, $g\in I(F_i)$, $i=1,2$, are certain polynomials with can be explicitly constructed (see \cite{dume}, Section 6). The associated eigenvalues are given
by the polynomial $\lambda _\F^{a,c}$, so that $D_\F(q_n^{a,c;\F})=\lambda _\F^{a,c}(n)q_n^{a,c;\F}$.

In \cite{DdI}, Theorem 6.2, it is also proved that $D_\F$ is a difference operator of order $2w_\F+1$, which can be written in terms of the shift operators $\Sh_j$ in the form
\begin{equation}\label{expexp4}
D_\F=\sum_{j=-w_\F}^{w_\F} h_j(x)\Sh _j,
\end{equation}
where $h_j$, $j=-w_F,\cdots,w_F$, are certain polynomials.

It turns out that the exceptional Meixner polynomials $m_n^{a,c;\F}$, $n\in \sigma_\F$, are strongly related by duality with  the polynomials $q_n^{a,c;\F}$, $n\ge 0$.

\begin{lemma}[Lemma 3.2 of \cite{dume}]\label{lem3.3}
If $u$ is a nonnegative integer and $v\in \sigma_\F$, then
\begin{equation}\label{duaqnrn2}
q_u^\F(v)=\kappa\xi_u\zeta_vm_v^\F(u),
\end{equation}
where
\begin{align}\nonumber
\kappa&=\frac{(-1)^{\sum _{f\in F_2}f}a^{k_2(k_1+1)+\sum _{f\in F_2}f}\prod_{f\in F_1}f!\prod_{f\in F_2}f!}{(a-1)^{k_2(k_1+1)}\prod_{f\in F_1}(1+c)_{f-1}\prod_{f\in F_2}(1+c)_{f-1}},\\\nonumber
\xi_u&=\frac{a^{(k_1+1)u}\prod_{i=0}^k(1+c)_{u+i-1}}
{(a-1)^{(k+1)u}\prod_{i=0}^k(u+i)!},\\\label{asqs}
\zeta_v&=\frac{(a-1)^{v}(v-u_\F)!}{a^v(1+c)_{v-u_\F-1}\prod_{f\in F_1}(v-f-u_\F)\prod_{f\in F_2}(v+c+f-u_\F)}.
\end{align}
\end{lemma}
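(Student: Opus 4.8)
The plan is to mimic the Charlier case (Lemma~\ref{lem3.2}): the identity will be obtained by feeding the classical self-duality of Meixner polynomials into the Christoffel-Szeg\"o determinant (\ref{defqnme}). Comparing leading coefficients one checks that, in the normalization of (\ref{Mxpol}) (for which $n!\,m_n^{a,c}(x)$ has leading coefficient $1$ in $x$), one has the hypergeometric representation $m_n^{a,c}(x)=\frac{(c)_n}{n!}\bigl(\frac{a}{a-1}\bigr)^n\,{}_2F_1(-n,-x;c;1-1/a)$ (see also \cite{KLS}). Since ${}_2F_1(-n,-x;c;1-1/a)$ is symmetric under $n\leftrightarrow x$, this gives, for all $n,x\in\NN$, the self-duality
\[
m_n^{a,c}(x)=\frac{\gamma_x}{\gamma_n}\,m_x^{a,c}(n),\qquad
\gamma_m:=\frac{m!}{(c)_m}\left(\frac{a-1}{a}\right)^{m},
\]
together with the analogous statement for the parameter $1/a$, whose constant is $\gamma_m^{1/a}=\frac{m!}{(c)_m}(1-a)^m$; observe that $\gamma_m^{1/a}=(-1)^m a^m\gamma_m$.

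Next I would substitute this into (\ref{defqnme}) evaluated at $n=u$, $x=v$. As $u\ge0$ and $v-u_\F\ge0$ (because $v\in\sigma_\F$), every degree $u+i$ and every argument $v-u_\F$, $f$ that appears is a nonnegative integer, so the duality applies entrywise: $m_{u+i}^{a,c}(v-u_\F)\mapsto\frac{\gamma_{v-u_\F}}{\gamma_{u+i}}m_{v-u_\F}^{a,c}(u+i)$, $m_{u+i}^{a,c}(f)\mapsto\frac{\gamma_f}{\gamma_{u+i}}m_f^{a,c}(u+i)$, and, using $\gamma_{u+i}^{1/a}=(-1)^{u+i}a^{u+i}\gamma_{u+i}$ and $(-1)^i(-1)^{u+i}=(-1)^u$,
\[
(-1)^i m_{u+i}^{1/a,c}(f)\ \longmapsto\ \frac{1}{\gamma_{u+i}}\,(-1)^u a^{-u}\gamma_f^{1/a}\,\frac{m_f^{1/a,c}(u+i)}{a^{i}}.
\]
Thus $1/\gamma_{u+i}$ is a common factor of the whole of column $i$ (the $F_2$ rows included) and pulls out column by column; the factors $\gamma_{v-u_\F}$, $\gamma_f$ and $(-1)^u a^{-u}\gamma_f^{1/a}$ pull out row by row; and the surviving $a^{-i}$ twist in the $F_2$ block is exactly the twist built into (\ref{defmex}). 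Hence the remaining determinant is literally the one defining $m_v^{a,c;\F}(u)$, and since the products $\prod_{f\in F_1}(v-f-u_\F)\prod_{f\in F_2}(v+c+f-u_\F)$ (nonzero for $v\in\sigma_\F$) appear both in (\ref{defqnme}) and in (\ref{defmex}), and the sign $(-1)^{uk_2}$ cancels the one produced by the $F_2$ rows, one obtains $q_u^{a,c;\F}(v)=C(u,v)\,m_v^{a,c;\F}(u)$ with
\[
C(u,v)=\frac{\gamma_{v-u_\F}\,\prod_{f\in F_1}\gamma_f\cdot a^{-uk_2}\prod_{f\in F_2}\gamma_f^{1/a}}{\prod_{i=0}^{k}\gamma_{u+i}\cdot\prod_{f\in F_1}(v-f-u_\F)\prod_{f\in F_2}(v+c+f-u_\F)}.
\]

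Finally I would expand $C(u,v)$ with $\gamma_m=\frac{m!}{(c)_m}(\frac{a-1}{a})^m$ and $\gamma_m^{1/a}=\frac{m!}{(c)_m}(1-a)^m$, and check that it coincides with $\kappa\xi_u\zeta_v$. Writing $(1+c)_{m-1}=(c)_m/c$, the factorial/Pochhammer parts match term by term, the powers of $c$ cancel, and the exponents of $a$ and of $a-1$ come out correctly after using $\sum_{f\in F_1}f+\sum_{f\in F_2}f=u_\F+\binom{k_1+1}{2}+\binom{k_2}{2}$ (from (\ref{defuf2})) and the elementary identity $\binom{k_1+1}{2}+\binom{k_2}{2}-\binom{k+1}{2}=-k_2(k_1+1)$. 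I expect the main obstacle to be exactly this constant bookkeeping: simultaneously controlling the two normalizations ($a$ versus $1/a$), the two sources of signs (the $(-1)^i$ column twist in the $F_2$ block and the $(-1)^{uk_2}$ factor in (\ref{defqnme})) and several exponent sums, so that everything collapses to the stated closed forms for $\kappa$, $\xi_u$ and $\zeta_v$; the determinantal part — identifying which factor is a row factor, which a column factor, and which twist survives — is the only structural step and is a direct analogue of the Charlier computation.
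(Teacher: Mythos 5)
Your proposal is correct, and it is the standard argument (the one used for Lemma 3.2 in \cite{dume} and for its Charlier analogue): the present paper does not reprove the lemma but simply imports it, so there is nothing in-text to diverge from. I checked the key steps — the normalization $m_n^{a,c}(x)=\frac{(c)_n}{n!}\bigl(\frac{a}{a-1}\bigr)^n\,{}_2F_1(-n,-x;c;1-1/a)$, the relation $\gamma_m^{1/a}=(-1)^m a^m\gamma_m$, the cancellation of $(-1)^{uk_2}$ against the sign from the $F_2$ rows, and the exponent bookkeeping via $\binom{k_1+1}{2}+\binom{k_2}{2}-\binom{k+1}{2}=-k_2(k_1+1)$ — and your $C(u,v)$ does reduce exactly to $\kappa\xi_u\zeta_v$.
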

We are now ready to establish the main result of this section.

\begin{corollary}\label{cor3} The exceptional Meixner polynomials defined by (\ref{defmex}) satisfy a $2w_\F+1$ order recurrence relation of the form
\begin{equation}\label{horrmex}
\sum_{j=-w_\F}^{w_\F}A_j^{a,c;\F}(n)m_{n+j}^{a,c;\F}(x)=\lambda _\F^{a,c} (x) m_{n}^{a,c;\F}(x),\quad n\ge 0,
\end{equation}
where the number $w_\F$ and the polynomial $\lambda_\F^{a,c}$ are defined by (\ref{defwf2}) and (\ref{lme}), respectively. For $j=-w_\F,\cdots , w_\F$, $A_j^{a,c;\F}(n)$ is a rational function in $n$ which does not depend on $x$ and whose denominator does not vanish in $n\in \NN$.
\end{corollary}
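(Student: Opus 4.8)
The plan is to follow, essentially verbatim, the proof of Corollary~\ref{cor1}, with the Charlier data replaced everywhere by the corresponding Meixner data. The starting point is the eigenvalue identity for the difference operator $D_\F$ of (\ref{expexp4}) acting on the Krall--Meixner polynomials,
\begin{equation*}
\sum_{j=-w_\F}^{w_\F}h_j(n)\,q_m^{a,c;\F}(n+j)=\lambda_\F^{a,c}(m)\,q_m^{a,c;\F}(n),
\end{equation*}
which holds for all $m,n\in\NN$. When $n\in\sigma_\F$ and every shifted index $n+j$, $j=-w_\F,\cdots,w_\F$, also lies in $\sigma_\F$, I would insert the duality relation $q_u^{a,c;\F}(v)=\kappa\,\xi_u\zeta_v\,m_v^{a,c;\F}(u)$ of Lemma~\ref{lem3.3} and cancel the nonzero constant $\kappa\xi_m\zeta_n$. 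This produces (\ref{horrmex}) evaluated at $x=m$ for every $m\in\NN$, hence identically in $x$, with recurrence coefficients
\begin{equation*}
A_j^{a,c;\F}(n)=\frac{h_j(n)\,\zeta_{n+j}}{\zeta_n}.
\end{equation*}
From the explicit value of $\zeta_v$ in (\ref{asqs}) and the standing hypotheses $0<a<1$ and $c\neq0,-1,-2,\cdots$, the quotient $\zeta_{n+j}/\zeta_n$ is a rational function of $n$; the check that the denominator of $A_j^{a,c;\F}$ does not vanish on $\NN$ is completed in the final paragraph.

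Next I would dispose of the indices $n\notin\sigma_\F$. In that case the right-hand side of (\ref{horrmex}) is $0$ because $m_n^{a,c;\F}=0$ by (\ref{defmex}), and every summand on the left is $0$ as well: either $n+j\notin\sigma_\F$, so that $m_{n+j}^{a,c;\F}=0$; or $n+j\in\sigma_\F$, in which case the factor $\zeta_{n+j}/\zeta_n$ hidden in $A_j^{a,c;\F}(n)$ vanishes, since for $n\notin\sigma_\F$ one has $1/\zeta_n=0$, as follows from the explicit formula (\ref{asqs}). This is the exact analogue of the corresponding step in the proof of Corollary~\ref{cor1}.

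The delicate case, and the main obstacle, is $n\in\sigma_\F$ together with $n+j\notin\sigma_\F$ for some $j$. Under the standing assumptions the polynomials $q_n^{a,c;\F}$ are orthogonal with respect to the positive measure $\rho_{a,c}^{\F}$ of (\ref{mrafm}) and are eigenfunctions of $D_\F$, hence $D_\F$ is symmetric with respect to $\rho_{a,c}^{\F}$. By Theorem~3.2 of \cite{du0}, symmetry of a difference operator $\sum_j h_j(x)\Sh_j$ with respect to a discrete weight supported on $\sigma_\F$ is characterized by a system of difference equations together with the boundary conditions
\begin{align*}
h_j(x-j)&=0\qquad\mbox{for $x\in(j+\sigma_\F)\setminus\sigma_\F$ and $j=1,\cdots,w_\F$},\\
h_{-j}(x)&=0\qquad\mbox{for $x\in\sigma_\F\setminus(j+\sigma_\F)$ and $j=1,\cdots,w_\F$}.
\end{align*}
A direct combinatorial inspection of the definition (\ref{defsf2}) of $\sigma_\F$ then shows that these conditions force $h_j(n)=0$ whenever $n\in\sigma_\F$ and $n+j\notin\sigma_\F$. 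Since $h_j$ is a polynomial in $n$, this zero cancels the pole that $\zeta_{n+j}$ acquires at such points, so each $A_j^{a,c;\F}(n)=h_j(n)\zeta_{n+j}/\zeta_n$ is a rational function of $n$ with no poles on $\NN$, which settles the point left open above. Finally, at such a point both sides of
\begin{equation*}
h_j(n)\,q_m^{a,c;\F}(n+j)=\frac{h_j(n)\,\zeta_{n+j}}{\zeta_n}\,m_{n+j}^{a,c;\F}(m)
\end{equation*}
equal $0$ (the left because $h_j(n)=0$, the right because $m_{n+j}^{a,c;\F}=0$), so the duality substitution can still be carried out term by term and (\ref{horrmex}) follows exactly as in the first paragraph. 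The crux of the whole argument is thus the combinatorial verification that the boundary conditions of \cite{du0} yield $h_j(n)=0$ for $n\in\sigma_\F$ and $n+j\notin\sigma_\F$ --- precisely the step singled out as the hardest in the proof of Corollary~\ref{cor1}.
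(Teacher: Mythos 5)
Your proposal is correct and is precisely the argument the paper intends: the paper's own proof of Corollary \ref{cor3} simply states that it is "similar to that of Corollary \ref{cor1}" and records the formula $A_j^{a,c;\F}(n)=h_j(n)\zeta_{n+j}/\zeta_n$, which is exactly the adaptation you carry out (including the extra constant $\kappa$ from Lemma \ref{lem3.3}, the vanishing of $1/\zeta_n$ off $\sigma_\F$, and the symmetry/boundary-condition argument from \cite{du0} for the case $n\in\sigma_\F$, $n+j\notin\sigma_\F$). No discrepancies to report.
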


\begin{proof}
The proof is similar to that of Corollary \ref{cor1} and it is omitted. We only point out that the relationship between the coefficients $A_j^{a,c;\F}$ (\ref{horrmex}) and $h_j$ (\ref{expexp4}) is given by
\begin{equation}\label{esum}
A_j^{a,c;\F}(n)=\frac{h_j(n)\zeta_{n+j}}{\zeta_n},
\end{equation}
where $\zeta_n$ is defined by (\ref{asqs})

\end{proof}

Our conjecture is that the minimal order for the higher order recurrence relations of the form (\ref{horr}) satisfied by the exceptional Meixner polynomials is $2w_\F+1$.

Here it is a trio of examples.

Consider $F_1=\{1,2\}, F_2=\emptyset$ and $\F=(F_1,F_2)$.
We have $w_\F=3$ and then according to the Corollary \ref{cor3}, the exceptional Meixner polynomials $(m_n^{a,c;\F})_n$ satisfy a seven order recurrence relation. Using (\ref{expexp3}), we can compute explicitly the seven order difference operator with respect to which the polynomials $(q_n^{a,c;\F})_n$ are eigenfunctions and the corresponding eigenvalues. Applying then  (\ref{asqs})
and (\ref{esum}) we can explicitly find the coefficients $A_j^{a,c;\F}$, $j=-3,\cdots , 3$:
\begin{equation}\label{coefm1}
A_j^{a,c;\F}(n)=\begin{cases}
\frac{a^3(n+c-3)(n+c-2)(n+c-1)}{6(a-1)^6}, &\mbox{if $j=-3$},\\
-\frac{a^2(a+1)(n+c-2)(n+c-1)(n-2)}{2(a-1)^5}, &\mbox{if $j=-2$},\\
\frac{a(n+c-1)(n-1)((n-2)(a^2+3a+1)+ac)}{2(a-1)^4}, &\mbox{if $j=-1$},\\
-\frac{(a+1)[(a^2+8a+1)n(n-1)(n-2)/6+acn(n-2)]}{(a-1)^3}&\\\qquad\frac{-a^3c(c+1)(c+2)}{6(a-1)^3}, &\mbox{if $j=0$},\\
\frac{(n+1)(n-2)((n-1)(a^2+3a+1)+ac)}{(a-1)^2}, &\mbox{if $j=1$},\\
-\frac{(a+1)(n-1)(n^2-4)}{2(a-1)}, &\mbox{if $j=2$},\\
\frac{(n+3)(n-1)(n-2)}{6}, &\mbox{if $j=3$}
\end{cases}
\end{equation}
and $\displaystyle \lambda_\F^{a,c}(x)=\frac{x^3}{6}+\frac{(a+ac-1)x^2}{2(a-1)}
+\frac{(3ac(2a+ac-1)+2(a-1)^2)x}{6(a-1)^2}$.

In this case, we have checked by using Maple that seven is the minimal order for a higher order recurrence relation for this family of exceptional Meixner polynomials (only linear equations are needed).

The admissibility condition (\ref{defadmi}) for this example reduces to $c\in (-2,-1)\cup (0,+\infty)$, but the recurrence formula also holds for
$a\not =0,1$ and $c\not =0,-1,-2,\cdots$.

\bigskip

Consider $F_1=\emptyset, F_2=\{1\}$. We have $w_F=2$ and then according to the Corollary \ref{cor3}, the exceptional Meixner polynomials $(m_n^{a,c;\F})_n$ satisfy a five order recurrence relation. Proceeding as before, we get:
\begin{equation}\label{coefm2}
A_j^{a,c;\F}(n)=\begin{cases}
-\frac{a^2(n+c-3)(n+c)}{2(a-1)^4}, &\mbox{if $j=-2$},\\
\frac{a(a+1)(n+c-2)(n+c))}{(a-1)^3}, &\mbox{if $j=-1$},\\
-\frac{(a^2/2+2a+1/2)n(n-1)-c(a^2+3a+1)n}{(a-1)^2}& \\\qquad-\frac{c(c(a^2+2a)-a^2-2a-2)}{2(a-1)^2}, &\mbox{if $j=0$},\\
\frac{(a+1)n(n+c)}{a-1}, &\mbox{if $j=1$},\\
-\frac{n(n+1)}{2}, &\mbox{if $j=2$}
\end{cases}
\end{equation}
and $\displaystyle \lambda_F^{a}(x)=-\frac{x(x(a-1)+a-2c-1)}{2(a-1)}$.

In this case, we have checked by using Maple that five is the minimal order for a higher order recurrence relation for this family of exceptional Meixner polynomials (only linear equations are needed).

Consider $F_1=\{1\}, F_2=\{1\}$.
We have $w_\F=3$ and then according to the Corollary \ref{cor3}, the exceptional Meixner polynomials $(m_n^{a,c;\F})_n$ satisfy a seven order recurrence relation. Using (\ref{expexp2}), we can compute explicitly the seven order difference operator with respect to which the polynomials $(q_n^{a,c;\F})_n$ are eigenfunctions and the corresponding eigenvalues. Applying then Corollary (\ref{cor3})
and (\ref{esum}) we can explicitly find the coefficients $A_j^{a,c;\F}$, $j=-3,\cdots , 3$:
\begin{equation}\label{coefm3}
A_j^{a,c;\F}(n)=\begin{cases}
-\frac{a^2(n+c-4)(n+c-2)(n+c)}{3(a-1)^5}, &\mbox{if $j=-3$},\\
\frac{a(a+1)(n+c-3)(n+c)(2n+c-4)}{2(a-1)^4}, &\mbox{if $j=-2$},\\
-\frac{(a^2+3a+1)(n+c-2)(n+c)(n-2)}{(a-1)^3}, &\mbox{if $j=-1$},\\
\frac{(a+1)n[(a^2+8a+1)(2n^2+3(c-2)n+4)-3c(3a^2+(-2c+20)a+3)]}{6a(a-1)^2}&\\\qquad -\frac{c(a^3(c+4)(c-1)+3a^2(c+8)(c-1)+6a(c-7)-6)}{6a(a-1)^2}, &\mbox{if $j=0$},\\
-\frac{(a^2+3a+1)(n+c)(n-2)n}{a(a-1)}, &\mbox{if $j=1$},\\
\frac{(a+1)(n-2)(n+1)(2n+c)}{2a}, &\mbox{if $j=2$},\\
-\frac{(a-1)n(n^2-4)}{3a}, &\mbox{if $j=3$}
\end{cases}
\end{equation}
and $\displaystyle \lambda_F^{a}(x)=-\frac{(a-1)x^3}{3a}-\frac{(2a+ac-2-c)x^2}{2a}
-\frac{(-6c^2a+3(a^2-6a+1)c+4(a-1)^2)x}{6a(a-1)}$.

\section{Exceptional Laguerre polynomials}
For $\alpha\in\mathbb{R}$, we write $(L_n^\alpha )_n$ for the sequence of Laguerre polynomials
\begin{equation}\label{deflap}
L_n^{\alpha}(x)=\sum_{j=0}^n\frac{(-x)^j}{j!}\binom{n+\alpha}{n-j}
\end{equation}
(that and the next formulas can be found in \cite{EMOT}, vol. II, pp. 188--192; see also \cite{KLS}, pp, 241-244).

For $\alpha\neq-1,-2,\ldots$, they are orthogonal with respect to a measure $\mu_{\alpha}=\mu_{\alpha}(x)dx$. This measure is positive
only when $\alpha>-1$ and then
$$
\mu_{\alpha}(x) =x^\alpha e^{-x}, x>0.
$$
One can obtain Laguerre polynomials from Meixner polynomials using the limit
\begin{equation}\label{blmel}
\lim_{a\to 1}(a-1)^nm_n^{a,c}\left(\frac{x}{1-a}\right)=L_n^{c-1}(x)
\end{equation}
see \cite{KLS}, p. 243.

Exceptional Laguerre polynomials can be defined by means of the Wronskian type determinant
\begin{equation}\label{deflax}
L_n^{\alpha ;\F}(x)= \left|
  \begin{array}{@{}c@{}cccc@{}c@{}}
    & L_{n-u_\F}^{\alpha}(x)&(L_{n-u_\F}^{\alpha})'(x)&\cdots &(L_{n-u_\F}^{\alpha})^{(k)}(x) & \\
    \dosfilas{ L_{f}^{\alpha}(x) & (L_{f}^{\alpha})'(x) &\cdots  & (L_{f}^{\alpha})^{(k)}(x) }{f\in F_1} \\
    \dosfilas{ L_{f}^{\alpha}(-x) & L_{f}^{\alpha+1}(-x) & \cdots & L_{f}^{\alpha +k}(-x) }{f\in F_2}
  \end{array}
  \right|.
\end{equation}
As before, we have that for $n\in \sigma _\F$ (see  (\ref{defsf2})), $L_n^{\alpha ;\F}$ is a polynomial of degree $n$.  But for $n\not \in \sigma_\F$ the determinant (\ref{defmex}) vanishes and then $L_n^{\alpha ;\F}=0$.

If we write
\begin{equation}\label{defhom}
\Omega _{\alpha;\F}^{\alpha}(x)=\left|
  \begin{array}{@{}c@{}cccc@{}c@{}}
    \dosfilas{ L_{f}^{\alpha}(x) & (L_{f}^{\alpha})'(x) &\cdots  & (L_{f}^{\alpha})^{(k-1)}(x) }{f\in F_1} \\
    \dosfilas{ L_{f}^{\alpha}(-x) & L_{f}^{\alpha+1}(-x) & \cdots & L_{f}^{\alpha +k-1}(-x) }{f\in F_2}
  \end{array}
  \right| ,
\end{equation}
the admissibility condition (\ref{defadmi}) is equivalent to the fact that the polynomial $\Omega _{\alpha;\F}$ does not vanish in $[0,+\infty)$ (see \cite{dume} and \cite{dupe}). Then, the polynomials $L_n^{\alpha;\F}$ are orthogonal with respect to the positive weight
$$
\omega_{\alpha;\F}(x)=\frac{x^{\alpha +k}e^{-x}}{(\Omega_\F^\alpha(x))^2},\quad x>0,
$$
and they are called exceptional Laguerre polynomials (see \cite{dume}).

Exceptional Laguerre polynomials can be obtained from exceptional Meixner polynomials by using the basic limit (\ref{blmel}). More precisely
\begin{equation}\label{lim2}
\lim_{a\to 1}(a-1)^{n-(k_1+1)k_2}m_n^{a,c;\F}\left(\frac{x}{1-a}\right)=(-1)^{\binom{k+1}{2}+\sum_{f\in F_2}f}L_n^{\alpha ;\F}(x)
\end{equation}
uniformly in compact sets.

Up to an additive constant, we define the polynomial $\lambda_{\alpha;\F}$ of degree $w_\F$ as the solution of the first order differential equation
\begin{equation}\label{mm2}
\lambda _{\alpha;\F}'(x)=\Omega _\G^{-\alpha-\max F_1- \max F_2-2}(-x),
\end{equation}
where $\G=(I(F_1),I(F_2))$, $I$ is the involution defined by (\ref{dinv}) and $\Omega_\F^\alpha $ is the Wronskian type determinant (\ref{defhom}).
We are now ready to establish the main result of this section (the proof is omitted because is similar to that of \ref{cor2}).

\begin{corollary}\label{cor4} The exceptional Laguerre polynomials defined by (\ref{deflax}) satisfy a $2w_\F+1$ order recurrence relation of the form
\begin{equation}\label{horrlax}
\sum_{j=-w_\F}^{w_\F}A_j^{\alpha;\F}(n)L_{n+j}^{\alpha;\F}(x)=\lambda_{\alpha; \F} (x) L_{n}^{\alpha;\F}(x),
\end{equation}
where the number $w_\F$ and the polynomial $\lambda_{\alpha ;\F}$ are defined by (\ref{defwf2}) and (\ref{mm2}), respectively. For $j=-w_\F,\cdots , w_\F$, $A_j^{\alpha;\F}(n)$ is a rational function in $n$ which does not depend on $x$ and whose denominator does not vanish for $n\in \NN$.
\end{corollary}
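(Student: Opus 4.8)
The plan is to reduce Corollary~\ref{cor4} to Corollary~\ref{cor3} by taking the limit $a\to 1$, exactly as Corollary~\ref{cor2} was deduced from Corollary~\ref{cor1}. I would start from the seven-or-more term recurrence relation \eqref{horrmex} satisfied by the exceptional Meixner polynomials $m_n^{a,c;\F}$, put $c=\alpha+1$, and substitute $x\mapsto x/(1-a)$. The point is that each ingredient of \eqref{horrmex} has a controlled limit as $a\to 1$: by the basic limit \eqref{blmel} and its consequence \eqref{lim2}, the scaled exceptional Meixner polynomials $(a-1)^{n-(k_1+1)k_2}m_n^{a,\alpha+1;\F}(x/(1-a))$ converge, up to the sign $(-1)^{\binom{k+1}{2}+\sum_{f\in F_2}f}$, to the exceptional Laguerre polynomials $L_n^{\alpha;\F}(x)$ uniformly on compact sets, and the polynomial $\lambda_\F^{a,\alpha+1}$ converges, after the appropriate rescaling, to $\lambda_{\alpha;\F}$ defined by \eqref{mm2}.

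The key computational step is to track the powers of $(a-1)$. If I multiply both sides of \eqref{horrmex} (with $c=\alpha+1$, $x$ replaced by $x/(1-a)$) by a suitable power $(a-1)^{\ell}$, chosen so that the central term $n$-th term on the left produces the factor $(a-1)^{n-(k_1+1)k_2}$ that \eqref{lim2} requires, then the $(n+j)$-th term acquires an extra factor $(a-1)^{-j}$ which must be absorbed into $A_j^{a,\alpha+1;\F}(n)$. So I would define $A_j^{\alpha;\F}(n)=\lim_{a\to 1}(a-1)^{-j}A_j^{a,\alpha+1;\F}(n)$ and check this limit exists and is finite; this is visible in the worked examples, where the recurrence coefficients \eqref{coefm1}, \eqref{coefm2}, \eqref{coefm3} have denominators that are exactly powers of $(a-1)$ matching the index $j$. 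On the right-hand side one similarly needs $\lim_{a\to 1}(a-1)^{w_\F}\lambda_\F^{a,\alpha+1}(x/(1-a))$ to exist and equal $\lambda_{\alpha;\F}(x)$; since $\lambda_\F^{a,c}$ has degree $w_\F$ and its leading coefficient behaves like $(1-a)^{-w_\F}$ up to constants (as one sees in the examples), the scaling works out. The rationality of $A_j^{\alpha;\F}(n)$ in $n$ with non-vanishing denominator on $\NN$ is inherited from the corresponding property in Corollary~\ref{cor3} together with the fact that the limit of rational functions, when it exists uniformly, is again rational.

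The main obstacle is justifying that the limit of the recurrence coefficients exists at the right rate, i.e.\ that $(a-1)^{-j}A_j^{a,\alpha+1;\F}(n)$ has a finite nonzero limit as $a\to 1$ and that no term degenerates or disappears. This requires a uniform estimate on the coefficients $h_j(x)$ in the expansion \eqref{expexp4} of $D_\F$ as $a\to 1$, which in turn rests on the explicit construction of $D_\F$ via \eqref{expexp3}: one must show the $a$-dependence of the polynomials $M_g^i$ and of $\lambda_\F^{a,c}$ is such that, after the Meixner-to-Laguerre rescaling of the difference operator into a differential operator, the limiting operator is precisely the one whose eigenfunctions are the $q_n^{\alpha;\F}$ (the Krall--Laguerre polynomials). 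This is the content of the limiting argument in \cite{duch}, Sections~5 and~6, transported from the Charlier/Hermite setting to the Meixner/Laguerre setting; once that operator-level limit is in place, passing to the recurrence via the duality \eqref{duaqnrn2} and its Laguerre analogue, and reading off $A_j^{\alpha;\F}(n)=\lim_{a\to1}(a-1)^{-j}A_j^{a,\alpha+1;\F}(n)$, is routine. For $n\notin\sigma_\F$ both sides of \eqref{horrlax} vanish by the same determinant-degeneracy argument used in the proof of Corollary~\ref{cor1}, so no separate treatment is needed there. Since these steps are entirely parallel to the Hermite case already carried out, the proof is omitted.
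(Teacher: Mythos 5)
Your approach is exactly the paper's: the proof of Corollary~\ref{cor4} is omitted there precisely because it is the $a\to1$ Meixner-to-Laguerre analogue of the $a\to+\infty$ limit used to deduce Corollary~\ref{cor2} from Corollary~\ref{cor1}, which is what you carry out. One bookkeeping slip: since $\lambda_\F^{a,\alpha+1}(x/(1-a))$ blows up like $(1-a)^{-w_\F}$, both sides of the scaled recurrence must also be multiplied by $(1-a)^{w_\F}$, so the coefficient limit should read $A_j^{\alpha;\F}(n)=\lim_{a\to1}(1-a)^{w_\F}(a-1)^{-j}A_j^{a,\alpha+1;\F}(n)$ --- consistent with the denominators $(a-1)^{w_\F-j}$ (not $(a-1)^{-j}$) visible in \eqref{coefm1}--\eqref{coefm3}.
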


As for the other families of exceptional polynomials considered in this paper, our conjecture is that the minimal order for the higher order recurrence relations of the form (\ref{horr}) satisfied by the exceptional Laguerre polynomials is $2w_\F+1$.

For $F_1=\emptyset$ and $F_2=\{\ell\}$, our family $L_n^{\alpha;\F}$ coincides, up to renormalization, with the so-called type I exceptional Laguerre polynomials. In \cite{STZ}, the authors prove that type I exceptional Laguerre polynomials satisfy a $4\ell+1$ order recurrence relation of the form (\ref{horr}) (an explicit expression for this recurrence relation is not provided in \cite{STZ}). Since for this particular pair $\F=(F_1,F_2)$, $w_\F=\ell+1$, we have that our corollary gives a $2\ell+3$ order recurrence relation for this family.

\bigskip
For $\F=(F_1,F_2)$, with $F_1=\{ 1,2\}, F_2=\emptyset$, taking limit in (\ref{coefm1}), we get explicit expressions for the recurrence coefficients of
a seven term recurrence relation for the exceptional Laguerre polynomials $(L_n^{\alpha;\F})_n$. More precisely, the exceptional Laguerre polynomials $(L_n^{\alpha;\F})_n$ satisfy the seven order recurrence relation (\ref{horrlax}) where $w_\F=3$,
$$
A_j^{\alpha;\F}(n)=\begin{cases}
-(n+\alpha)(n+\alpha-1)(n+\alpha-2)/6, &\mbox{if $j=-3$},\\
(n+\alpha)(n+\alpha-1)(n-2), &\mbox{if $j=-2$},\\
-n+\alpha)(5n+\alpha-9)(n-1)/2, &\mbox{if $j=-1$},\\
10n^3/3-(8-2\alpha)n^2-(4\alpha-8/3)n&\\\qquad +\alpha^3/6+\alpha^2+11\alpha/6+1, &\mbox{if $j=0$},\\
-(5n+\alpha-4)(n+1)(n-2)/2, &\mbox{if $j=1$},\\
(n-1)(n^2-4), &\mbox{if $j=2$},\\
-(n+3)(n-1)(n-2)/6, &\mbox{if $j=3$}
\end{cases}
$$
and $\displaystyle \lambda_F^{\alpha}(x)=x(x^2-3x(a+1)+3(a+1)(a+2))/6$.

\bigskip
For $\F=(F_1,F_2)$, with $F_1=\emptyset, F_2=\{ 1\}$, taking limit in (\ref{coefm2}) , we find that the exceptional Laguerre polynomials $(L_n^{\alpha;\F})_n$ satisfy the five order recurrence relation (\ref{horrlax}) where $w_\F=2$,
$$
A_j^{\alpha;\F}(n)=\begin{cases}
-(n+\alpha+1)(n+\alpha-2)/2, &\mbox{if $j=-2$},\\
2(n+\alpha+1)(n+\alpha-1), &\mbox{if $j=-1$},\\
-3n^2-(2+5\alpha)n-3\alpha^2/2-\alpha/2+1, &\mbox{if $j=0$},\\
2n(n+\alpha+1), &\mbox{if $j=1$},\\
-n(n+1)/2, &\mbox{if $j=2$},
\end{cases}
$$
and $\displaystyle \lambda_F^{\alpha}(x)=-x(x+2\alpha+2)/2$.

\bigskip
\bigskip
For $\F=(F_1,F_2)$, with $F_1=\{ 1\}, F_2=\{ 1\}$, taking limit in (\ref{coefm3}) , we find that the exceptional Laguerre polynomials $(L_n^{\alpha;\F})_n$ satisfy the seven order recurrence relation (\ref{horrlax}) where $w_\F=3$,
$$
A_j^{\alpha ;\F}(n)=\begin{cases}
-(n+\alpha-3)(n+\alpha-1)(n+\alpha +1)/3, &\mbox{if $j=-3$},\\
(n+\alpha -2)(n+\alpha +1)(2n+\alpha-3), &\mbox{if $j=-2$},\\
-5(n+\alpha-1)(n+\alpha +1)(n-2), &\mbox{if $j=-1$},\\
-(2n+\alpha-1)(-10n^2-10(\alpha-1)n+2\alpha^2+23\alpha+21)/3, &\mbox{if $j=0$},\\
-5(n+\alpha+1)(n-2)n, &\mbox{if $j=1$},\\
(n-2)(n+1)(2n+\alpha+1), &\mbox{if $j=2$},\\
-n(n^2-4)/3, &\mbox{if $j=3$}
\end{cases}
$$
and $\displaystyle \lambda_F^{a}(x)=x(x^2-3(a+3)(a+1))/3$.

\bigskip
\noindent
\textit{Mathematics Subject Classification: 42C05, 33C45, 33E30}

\noindent
\textit{Key words and phrases}: Orthogonal polynomials. Exceptional orthogonal polynomial. Recurrence relations. Difference operators. Differential operators. Charlier polynomials. Meixner polynomials. Hermite polynomials. Laguerre polynomials.

     \end{document}